\theoremstyle{plain}
\newtheorem{theorem}{Theorem}
\newtheorem{corollary}{Corollary}
\newtheorem{lemma}{Lemma}
\newtheorem{proposition}{Proposition}
\theoremstyle{definition}
\theoremstyle{remark}
\numberwithin{equation}{section}
\begin{document}
\title[Non-autonomous Non-instantaneous Impulsive System
]{On the existence and uniqueness of solutions for non-autonomous semi-linear systems with non-instantaneous impulses, delay, and non-local conditions
} 

\author[S. Lalvay]{Sebasti\'an Lalvay}
\address{Universidad Yachay Tech\\ School of Mathematical and Computational Sciences\\ Hacienda San Jos\'e S/N\\ 100115
  Urcuqu\'i\\ Imbabura, Ecuador}

\email{sebastian.lalvay@yachaytech.edu.ec}

\thanks{The first author was supported in part by the XXX Fund, Grant No.~YYYYY.}

\author[A. Padilla-Segarra]{Adri\'an Padilla-Segarra}
\address{Universidad Yachay Tech\\ School of Mathematical and Computational Sciences\\ Hacienda San Jos\'e S/N\\ 100115
	Urcuqu\'i\\ Imbabura, Ecuador}

\email{adrian.padilla@yachaytech.edu.ec}

\author[W. Zouhair]{Walid Zouhair}

\address{Laboratory of Mathematics and Population Dynamics\\ Faculty of Sciences of Semlalia, Cadi Ayyad University\\ Marrakesh, BP 2390, 40000\\ Morocco}


\email{walid.zouhair.fssm@gmail.com}

\begin{abstract}
A non-autonomous evolution semi-linear differential system under non-instantaneous impulses, delays, and perturbed by non-local conditions is studied. Its piece-wise continuous solutions belong to a finite dimensional Banach space. The existence and uniqueness of solutions on the interval $[-r,\tau]$ are obtained by applying Karakostas' fixed-point theorem. Further results concerning solution prolongation are developed. An example is presented, and several remarks on the infinite-dimensional case are included.
\end{abstract}


\subjclass{93C10; 93C23}

\keywords{Non-instantaneous impulses, non-autonomous systems, Karakostas' fixed-point theorem, evolution equations, delay, non-local conditions}

\maketitle

\section{Introduction}
Impulsive systems are of vital importance on most scientific fields. They can  be found in applications ranging from biology and population dynamics to economics and engineering. Usually, many situations are modeled by differential equations. Controls, delays, impulses, and non-linear perturbations are added to capture either feedback or activity characterization. 

The interest of this article is the  non-autonomous non-instantaneous impulsive semi-linear system involving state-delay and non-local conditions, which is motivated by applications, such as species population, nanoscale electronic circuits consisting of single-electron tunneling junctions, and mechanical systems with impacts \cite{Lakshmikantham1989,Samoilenko1995,Yang2001}. In particular, impulses represent sudden deviations of the states at specific times, by either instantaneous jumps or continuous intervals.

Our mathematical motivation is to extend the existence and uniqueness of solutions on a finite-dimensional Banach space  \cite{Abada2010,Balachandran1996,Muslim2018} for the aforementioned semi-linear system. It is worth to highlight that some existence and controllability results on the impulsive autonomous case have been done by \cite{Nieto2010,Pandey2014,Wang2015}, and \cite{Leiva2017}. The latter authors are the pioneer in implementing Karakostas' fixed-point theorem to prove the existence of solutions on semi-linear equations with instantaneous impulses.

Furthermore, techniques from \cite{Bashirov2015} and Rothe's fixed-point theorem have been used in \cite{Guevara2018W,Guevara2018H,Malik2019} for studying the approximate and exact controllability of this family of systems with delays, instantaneous impulses, and memory considerations. Important results on autonomous impulsive systems involving delay were developed by \cite{Driver:1977,Hernandez2006,Li2006}. In addition, problems with non-local conditions including impulses can be found in \cite{Leiva2018}.

Finally, \cite{Hernandez2013} introduced the class of non-instantaneous impulsive systems, and \cite{Pierri2013} showed the existence of solutions for these systems. Later, \cite{A.Anguraj2016,Agarwal2017b,LeivaZouhair2020} showcased relevant studies for models on  non-instantaneous impulsive differential equations. However, it is not of our knowledge that there are results on the existence of solutions for semi-linear non-autonomous systems including all conditions simultaneously. This is the center of our research work.


This article is structured as follows. Section \ref{System Description} describes the  analyzed system and notation. Section \ref{Preliminary Theory and Hypotheses} deals with preliminary concepts, definitions, and hypotheses used throughout this work. Section \ref{Existence and Uniqueness of Solutions} is devoted to the existence and uniqueness of solutions for the system in the light of  Karakostas' fixed-point theorem, which is an extension of the fixed-point theorem due to M. A. Krasnosel’ski{\v\i} developed in \cite{Karakostas2003}. Finally, sections \ref{Example} and \ref{Final Remarks} illustrate these results with an example of the considered system and present conclusions and guidelines for open problems.
 


\section{System Description}\label{System Description}
Let $N\in\mathbb{N}$, and denote $I_N$ as the set $\{1,2,\ldots,N\}$. In this article, the existence and uniqueness of solutions for the following semi-linear non-autonomous system are proved:
\begin{equation}\label{eqExist}
	\begin{cases}
		z^{\prime}(t)  =  \boldsymbol{A}(t)z(t) +f(t,z_{t}), &t \in \bigcup\limits_{i=0}^{N}\left(s_{i}, t_{i+1}\right], \\
		z(t)= G_{i}(t,z(t)), &  t\in(t_{i},s_{i}],\ i\in I_N,\\
		z(t)=\phi(t)- g(z_{\theta_1},z_{\theta_2},\dots, z_{\theta_q})(t), & t \in [-r,0],
	\end{cases}
\end{equation}
where $s_{i}, t_{i}, \theta_{j}, r \in  (0,\tau)$,    with $t_{i}\leq s_{i}<t_{i+1}$, $\theta_{j}< \theta_{j+1}$, for $i\in I_N$ and $j\in I_{q}$, $s_{0}=t_{0}=0$, and  $t_{N+1}=\theta_{q+1}=\tau$, all fixed real numbers. The system solutions are denoted by $z:\mathcal{J}=[-r,\tau]\longrightarrow \mathbb{R}^n$
and the non-instantaneous impulses are represented by $G_i:(t_i,s_i]\times\mathbb{R}^{n}\longrightarrow \mathbb{R}^n,\ i\in I_N.$ $\boldsymbol{A}$ is a continuous matrix such that $\boldsymbol{A}(t)\in\mathbb{R}^{n\times n}$, $t \in \mathbb{R}$. $z_{t}$ stands for the translated function of $z$ defined by $z_{t}(s) = z(t+s)$, with $s\in [-r,0]$. The function $f :\overline{\mathbb{R}_+}\times PC_{r}([-r,0]; \mathbb{R}^{n}) \longrightarrow \mathbb{R}^n$ represents the non-linear perturbation of the differential equation in the system, where $\overline{\mathbb{R}_+}=[0,+\infty)$, and $g: PC_{r}^q([-r,0]; (\mathbb{R}^{n})^{q}) \longrightarrow PC_{r}([-r,0]; \mathbb{R}^{n})$ indicates the behavior in the non-local conditions. The function
\begin{equation}\label{phi}
    \phi:[-r,0]\longrightarrow\mathbb{R}^n
\end{equation}
represents the historical pass on the time interval $[-r,0]$.

In order to properly set system \eqref{eqExist}, the following Banach spaces are considered. Denote $C(\mathcal{U};\mathbb{R}^n)$ as the space of continuous functions on a set $\mathcal{U}\subset \mathbb{R}$. $PC_{r}=PC_{r}([-r,0];\mathbb{R}^n)$ is the space of continuous functions of the form \eqref{phi} except on a finite number of points $r_i,\ i\in I_l$, with \ $l \leq N$, where the side limits $\phi(r_i^+),\ \phi(r_i^-)$ exist, and $\phi(r_i)=\phi(r_i^-)$, for all $i\in I_N$, endowed with the supremum norm.

The natural Banach space for the solutions of system \eqref{eqExist} is defined as:
\begin{equation*}
	\begin{split}
		PC_{r\tau}=PC_{r\tau}(\mathcal{J};\mathbb{R}^n)= \Big\{z: & \mathcal{J} \longrightarrow  \mathbb{R}^{n}\ {\Big |} \ z\big |_{[-r,0]}\in PC_{r},\ z\big |_{[0,\tau]} \in C(\mathcal{J}^{\prime}; \mathbb{R}^{n}),\\
		&\text{ there exist }\ z(t_{k}^{+}),\ z(t_{k}^{-}),\ \text{ and }\ z(t_{k})=z(t_{k}^{-}),\ k\in I_N \Big\},
	\end{split}
\end{equation*}
where $\mathcal{J}^{\prime} = [0,\tau] \backslash \{t_1, t_2, \dots, t_{_{N}} \}$, and endowed with the norm
\begin{equation*}
	\|z\|=\|z\|_{0} = \sup_{t \in  \mathcal{J}} \|z(t)\|_{\mathbb{R}^n},\quad z\in PC_{r\tau}.
\end{equation*}

The cartesian product space given by $	\left(\mathbb{R}^n\right)^q=\mathbb{R}^n\times \mathbb{R}^n\times \dots \times \mathbb{R}^n=\displaystyle\prod_{i=1}^q \mathbb{R}^n$ is equipped with the norm $\displaystyle\|z\|_{\left(\mathbb{R}^n\right)^q}=\sum_{i=1}^q \|z_{i}\|_{\mathbb{R}^n}$, for $ z\in \left(\mathbb{R}^n\right)^q.$

The space $PC_{r}^q=PC_{r}^q([-r,0]; (\mathbb{R}^{n})^{q})$ is defined analogously and endowed with the norm
\begin{equation*}
	\|z\|_{PC_{r}^q}=\sup_{t\in [-r,0]}\|z(t)\|_{\left(\mathbb{R}^n\right)^q},\quad z\in PC_{r}^q.
\end{equation*}

\section{Preliminary Theory and Hypotheses}\label{Preliminary Theory and Hypotheses}

In this section, the evolution operator based on the corresponding linear system is defined. This work can be extended to infinite-dimensional Banach spaces. Thus, the properties of the evolution operator are included, aiming to establish their similarities with the latter case,  where uniform continuity is lost unless the evolution operator is assumed to be compact. Finally, the system solutions are characterized, and hypotheses for applying Karakostas' fixed-point theorem are presented.

Let $\boldsymbol{U}$  be the evolution operator corresponding to system \eqref{eqExist}
\begin{equation}\label{evolutionOp}
	\boldsymbol{U}(t,s) = \Phi(t)\Phi^{-1}(s),\quad\text{ for all } t, s\in \mathbb{R},
\end{equation}
where $\Phi$ is the fundamental matrix solution of the associated linear system
\begin{equation}\label{uncontrolled}
	z^{\prime}(t) =  \boldsymbol{A}(t)z(t).
\end{equation}


		
Therefore, there exist constants $\widehat{M},\ \omega >0$ and $M \geq 1$ such that:
		\begin{equation*}
			\|\boldsymbol{U}(t, s)\| \leq \widehat{M} e^{\omega(t-s)}\leq M, \quad 0 \leq s \leq t \leq \tau,
		\end{equation*}

The following proposition exhibits a characterization of the system \eqref{eqExist} solutions and is based on the works done in \cite{Leiva2018} and \cite{Pierri2013}.

\begin{proposition}
	
	\label{characterization}
	The semi-linear system \eqref{eqExist} has a solution $z\in PC_{r\tau}(\mathcal{J};\mathbb{R}^n)$ if, and only if,
	\begin{equation}
		\label{solution}
		z(t)=\begin{cases}
			\boldsymbol{U}(t,0)[\phi(0)-g(z_{\theta_1},z_{\theta_2},\dots, z_{\theta_q})(0)]\\
			+ \displaystyle \int_{0}^{t}\boldsymbol{U}(t,s)f(s,z_{s})ds,  & t \in (0,t_{1}],\\
			\displaystyle \boldsymbol{U}(t,s_{i})G_{i}(s_{i},z(s_{i})) +\displaystyle \int_{s_{i}}^{t} \boldsymbol{U}(t,s) f(s,z_{s})ds,
			& t \in\left(s_{i}, t_{i+1}\right],\ i\in I_N,\\
			G_{i}(t,z(t)), & t \in (t_{i},s_{i}],\ i\in I_N,\\
			\phi(t)- g(z_{\theta_1},z_{\theta_2},\dots, z_{\theta_q})(t), \quad  & t\in [-r,0].
		\end{cases}
	\end{equation}
\end{proposition}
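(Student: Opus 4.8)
The plan is to prove the two implications of the stated equivalence separately, in both directions exploiting the variation-of-constants (Duhamel) formula associated with the fundamental matrix $\Phi$. The facts I rely on are the defining property $\Phi'(t)=\boldsymbol{A}(t)\Phi(t)$, the invertibility of $\Phi$, and the consequent identities $\boldsymbol{U}(t,t)=I$ and $\partial_t\boldsymbol{U}(t,s)=\boldsymbol{A}(t)\boldsymbol{U}(t,s)$, all immediate from \eqref{evolutionOp}. The concrete tool is the representation that the solution of the linear inhomogeneous problem $z'(t)=\boldsymbol{A}(t)z(t)+h(t)$ with datum $z(t_0)=z_0$ is $z(t)=\boldsymbol{U}(t,t_0)z_0+\int_{t_0}^t\boldsymbol{U}(t,s)h(s)\,ds$. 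The argument then proceeds interval by interval according to the three regimes of \eqref{eqExist}: the impulse-free regimes $(0,t_1]$ and $(s_i,t_{i+1}]$, the impulse regimes $(t_i,s_i]$, and the history regime $[-r,0]$.

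For the forward implication, suppose $z\in PC_{r\tau}$ solves \eqref{eqExist}. On $[-r,0]$ and on each $(t_i,s_i]$ the representation \eqref{solution} is literally the third and second lines of \eqref{eqExist}, so nothing is to be shown. On $(0,t_1]$ the first equation of \eqref{eqExist} is the linear inhomogeneous ODE with forcing $h(s)=f(s,z_s)$ subject to the initial value read from the history regime, namely $z(0)=\phi(0)-g(z_{\theta_1},z_{\theta_2},\dots,z_{\theta_q})(0)$; the variation-of-constants formula with $t_0=0$ yields exactly the first line of \eqref{solution}. On each $(s_i,t_{i+1}]$ the same ODE holds, now with initial value at $t=s_i$ supplied by continuity from the preceding impulse regime, i.e. $z(s_i)=G_i(s_i,z(s_i))$; the variation-of-constants formula with $t_0=s_i$ produces the second line of \eqref{solution}.

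For the converse, suppose $z$ is given by \eqref{solution}. The history and impulse regimes reproduce the corresponding lines of \eqref{eqExist} by inspection. On each $(s_i,t_{i+1}]$ I write the convolution term as $\Phi(t)\int_{s_i}^t\Phi^{-1}(s)f(s,z_s)\,ds$, differentiate using $\Phi'(t)=\boldsymbol{A}(t)\Phi(t)$ together with the fundamental theorem of calculus, and use $\Phi(t)\Phi^{-1}(t)=I$; the boundary term is $f(t,z_t)$ while the remaining terms collapse via $\partial_t\boldsymbol{U}(t,s_i)=\boldsymbol{A}(t)\boldsymbol{U}(t,s_i)$ into $\boldsymbol{A}(t)z(t)$, recovering the ODE. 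The interval $(0,t_1]$ is treated identically with $s_i$ replaced by $0$, and evaluating \eqref{solution} at $t=s_i$ and as $t\to 0^+$ confirms the matching with the impulse and non-local data; hence $z$ solves \eqref{eqExist}.

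The point requiring the most care is the regularity statement $z\in PC_{r\tau}$ together with the legitimacy of differentiating the convolution term, and here the delay is the genuine subtlety: the map $s\mapsto z_s$ into $PC_r$ is only \emph{piecewise} continuous, with possible jumps at the finitely many impulse and history break-points. I would check that $s\mapsto f(s,z_s)$ is therefore bounded and integrable, so the integral term is well defined and continuous in $t$, and that it is continuous at each interior $t$ of an impulse-free interval, which is exactly what licenses the fundamental theorem of calculus in the converse. Finally, the existence of the one-sided limits $z(t_k^{+}),z(t_k^{-})$ and the normalization $z(t_k)=z(t_k^{-})$ follow from the continuity of $\boldsymbol{U}$, $G_i$, $\phi$, and $g$ and the boundedness of the integral term, which together place \eqref{solution} in the solution space $PC_{r\tau}$ and close the argument.
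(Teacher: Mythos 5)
The paper itself prints no proof of Proposition~\ref{characterization}; it defers to the cited works, and your interval-by-interval variation-of-constants strategy is exactly the standard argument those works use. Your forward implication is essentially sound: for a classical solution, $s\mapsto f(s,z_s)=z'(s)-\boldsymbol{A}(s)z(s)$ is bounded and measurable (a derivative of a Lipschitz function minus a continuous function), so differentiating $s\mapsto\boldsymbol{U}(t,s)z(s)$ and integrating from $s_i$ (with the initial value $z(s_i)=G_i(s_i,z(s_i))$ supplied, as you say, by the continuity of $PC_{r\tau}$-functions at $s_i$) is legitimate.

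The genuine gap is in the converse, at exactly the point you single out as ``the genuine subtlety'' and then dismiss with a false claim. You assert that $s\mapsto z_s\in PC_r$ is piecewise continuous with jumps only at finitely many break-points. It is not: if $z$ has a jump at $t_k$ (which solutions of \eqref{eqExist} generically do), then for every $s\in(t_k,t_k+r)$ the jump lies in the interior of the window $[s-r,s]$, and for $s'$ near $s$ one has $\|z_{s'}-z_s\|\geq|z(t_k^+)-z(t_k^-)|$ up to an arbitrarily small error, because the sliding jump misaligns; hence the translation map is discontinuous at \emph{every} point of the interval $(t_k,t_k+r)$, not at finitely many points. Consequently $s\mapsto f(s,z_s)$ need not be continuous at interior points of a flow interval, and the fundamental-theorem-of-calculus step fails there. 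Concretely, $f(t,\varphi)=\varphi(-r/2)$ satisfies \textbf{H2} with $K\equiv 1$, $\Psi(x)=x$, yet $f(s,z_s)=z(s-r/2)$ jumps at $s=t_k+r/2$, which for suitable parameters lies inside some $(s_i,t_{i+1}]$; at that point the map $t\mapsto\int_{s_i}^{t}\boldsymbol{U}(t,s)f(s,z_s)\,ds$ has distinct one-sided derivatives, so the function defined by \eqref{solution} is not differentiable there and cannot satisfy the first equation of \eqref{eqExist} classically. (Indeed, since derivatives have the Darboux property, no classical solution can exist on intervals reached by a transmitted jump, so the equivalence as literally stated can only hold if the differential equation is interpreted in the Carath\'eodory sense: $z$ absolutely continuous on each flow interval and the equation holding almost everywhere, together with an argument that $s\mapsto f(s,z_s)$ is measurable and integrable --- itself nontrivial, since $PC_r$ with the supremum norm is non-separable and one can build Lipschitz $f$ for which $s\mapsto f(s,z_s)$ fails to be measurable.) To repair your proof you must either adopt that weaker solution concept explicitly, or add hypotheses on $f$ preventing the delay from transmitting jumps; to be fair, this gap is inherited from the paper's framework, which states the proposition without proof.
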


Observe that on some interval $[-r,p_1)$, if a solution $z$ of the form \eqref{solution} is defined, 
and there is not $p_2>p_1$ such that a solution can be defined on $[-r,p_2)$, then, $[-r,p_1)$ is a \textit{maximal interval} of existence. 

In this work, the following hypotheses are assumed:
\begin{enumerate}
	\item[\textbf{H1}] The next conditions hold:
	\begin{enumerate}
		\item[(i)] The function $g$ fulfills that $g(0)=0$, and there exists $N_q > 0$ such that, for all $y,\ z \in PC_r^q$ and $t\in [-r,0]$,
		\begin{equation*}
			\|g(y)(t)-g(z)(t)\|_{\mathbb{R}^n} \leq N_{q} \|y(t)-z(t)\|_{(\mathbb{R}^n)^q}.
		\end{equation*}
		
		\item[(ii)] There exists a constant $L >0$ such that, for all $i\in I_N$, the functions $G_i$ satisfy $G_i(\cdot,0) = 0,$
		and, if $\varphi_1,\varphi_2\in PC_{r\tau}$, for $t\in (t_i,s_i]$, then,
		\begin{equation*}
			\|G_i(t,\varphi_1(t))-G_i(t,\varphi_2(t))\|_{\mathbb{R}^n}\leq L\|\varphi_1-\varphi_2\|,\quad\text{where}\quad L + N_{q} q  < \frac{1}{2}.
		\end{equation*}
	\end{enumerate}
	
	\item[\textbf{H2}] The function $f$ satisfies the following conditions:
	\begin{align*}
		\|f(t,\varphi_1)-f(t,\varphi_2)\|_{\mathbb{R}^n}  &\leq K(\|\varphi_1\|, \|\varphi_2\| )\|\varphi_1-\varphi_2\|,\\
		\|f(t,\varphi)\|_{\mathbb{R}^n}   &\leq  \Psi(\|\varphi\|),
	\end{align*}
	where $K: \overline{\mathbb{R}_{+}} \times \overline{\mathbb{R}_{+}} \longrightarrow \overline{\mathbb{R}_{+}}$ and $\Psi: \overline{\mathbb{R}_{+}} \longrightarrow \overline{\mathbb{R}_{+}}$ are continuous and non-decreasing functions in their arguments, and $\varphi, \varphi_1, \varphi_2 \in  PC_{r}([-r,0];\mathbb{R}^n)$.
	
	\item[\textbf{H3}] The following relations hold for $\tau$ and $\rho >0$:
	\begin{enumerate}
		\item[(i)] $ MN_{q} q \left(\|\tilde{\phi}\|+\rho\right)+ M \tau \Psi\left(\|\tilde{\phi}\| +\rho \right) \leq \rho, $
		\item[(ii)] $ML\left(\|\tilde{\phi}\|+\rho\right) +\| \alpha \|_{\mathbb{R}^n} + M \tau\Psi\left(\|\tilde{\phi}\|+\rho\right)\leq \rho, $
		\item[(iii)] $\displaystyle L\left(\|\tilde{\phi}\|+\rho\right) + \|\beta\|_{\mathbb{R}^n}  \leq \rho, $ 
	\end{enumerate}
	where $\alpha, \beta \in \mathbb{R}^n$ are arbitrarily fixed, and the function $\tilde{\phi}$ is defined as:
	\begin{equation}\label{phi_tilde}
		\tilde{\phi}(t)= \begin{cases}
			\boldsymbol{U}(t,0)\phi(0), & t \in (0, t_1], \\
			\alpha, & t \in \bigcup\limits_{i=1}^N (s_i,t_{i+1}],\\
			\beta,   & t \in\bigcup\limits_{i=1}^N (t_i,s_i],\\
			\phi(t), &t \in [-r,0].
		\end{cases}
	\end{equation}
	
	\item[\textbf{H4}] The following relations hold for $\tau$ and $\rho >0$:
	\begin{enumerate}
		\item[(i)] $M N_{q} q+ M \tau K\left(\|\tilde{\phi}\|+\rho,\|\tilde{\phi}\| +\rho \right)<1,$
		\item[(ii)]$\displaystyle ML + M \tau K\left(\|\tilde{\phi}\|+\rho, \|\tilde{\phi}\|+\rho \right)  < 1.$
	\end{enumerate}
\end{enumerate}

Theorem \ref{Karakostas} is the statement of Karakostas' fixed-point theorem (see \cite{Karakostas2003, Leiva2017}).
\begin{theorem}[Karakostas]\label{Karakostas}
	Let $\mathcal{P}$ and $\mathcal{Q}$ be Banach spaces, $D \subset \mathcal{P}$ a closed and convex subset, and $J:D \longrightarrow \mathcal{Q}$ a continuous compact operator. Let $F: D \times \overline{J(D)}\longrightarrow D$ be a continuous operator such that the family given by \newline $\left\{F(\cdot, y): y\in \overline{J(D)}\right\}$ is equicontractive. Then, $F(z, J(z))=z$ admits a solution in $D$.
\end{theorem}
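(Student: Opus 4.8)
The plan is to reduce Theorem~\ref{Karakostas} to the classical Banach and Schauder fixed-point theorems, following the Krasnoselskii-type scheme of which Karakostas' result is a refinement. First I would exploit the equicontractivity of the family $\{F(\cdot,y): y\in\overline{J(D)}\}$: by definition there is a single constant $\lambda\in[0,1)$, \emph{independent of $y$}, such that
\[
\|F(x_1,y)-F(x_2,y)\|_{\mathcal{P}}\le\lambda\,\|x_1-x_2\|_{\mathcal{P}}
\]
for all $x_1,x_2\in D$ and all $y\in\overline{J(D)}$. Since $D$ is a closed subset of the Banach space $\mathcal{P}$ it is complete, and since $F$ takes values in $D$, for each fixed $y$ the map $F(\cdot,y):D\to D$ is a contraction of a complete metric space into itself. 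The Banach fixed-point theorem then supplies a unique $\sigma(y)\in D$ with $F(\sigma(y),y)=\sigma(y)$, which defines a selection map $\sigma:\overline{J(D)}\to D$.

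Second, I would prove that $\sigma$ is continuous, which is the crux of the argument. For $y,y'\in\overline{J(D)}$, writing the two fixed-point identities and inserting the mixed term $F(\sigma(y'),y)$ yields
\[
\|\sigma(y)-\sigma(y')\|_{\mathcal{P}}\le\lambda\,\|\sigma(y)-\sigma(y')\|_{\mathcal{P}}+\|F(\sigma(y'),y)-F(\sigma(y'),y')\|_{\mathcal{P}},
\]
whence $\|\sigma(y)-\sigma(y')\|_{\mathcal{P}}\le(1-\lambda)^{-1}\|F(\sigma(y'),y)-F(\sigma(y'),y')\|_{\mathcal{P}}$. Continuity of $\sigma$ then follows from the joint continuity of $F$; it is precisely the $y$-uniformity of $\lambda$ that lets this estimate close.

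Third, I would set $\Theta:=\sigma\circ J:D\to D$. As $J$ is compact, $\overline{J(D)}$ is compact; since $\sigma$ is continuous, $\sigma(\overline{J(D)})$ is compact, so $\Theta(D)=\sigma(J(D))$ is relatively compact, while $\Theta$ is continuous as a composition of continuous maps. Thus $\Theta$ is a continuous, compact self-map of the nonempty closed convex set $D$, and Schauder's fixed-point theorem yields $z^{\ast}\in D$ with $\sigma(J(z^{\ast}))=z^{\ast}$. Unwinding the definition of $\sigma$,
\[
F(z^{\ast},J(z^{\ast}))=F\big(\sigma(J(z^{\ast})),J(z^{\ast})\big)=\sigma(J(z^{\ast}))=z^{\ast},
\]
which is the desired solution of $F(z,J(z))=z$.

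The main obstacle is the continuity of the selection $\sigma$; everything else is bookkeeping (checking that $\sigma$ is well-defined and $D$-valued, that $\overline{J(D)}$ is compact, and that Schauder applies to the composition). The single feature that makes the whole scheme work is the uniform contraction constant $\lambda$ furnished by equicontractivity: without it the displayed bound for $\|\sigma(y)-\sigma(y')\|_{\mathcal{P}}$ cannot be closed and $\sigma$ need not be continuous, so that requiring each $F(\cdot,y)$ to be merely a contraction would not suffice.
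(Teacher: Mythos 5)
Your proof is correct, but note that the paper contains no proof of this statement to compare it against: Theorem \ref{Karakostas} is quoted as a known result from \cite{Karakostas2003} (see also \cite{Leiva2017}) and used as a black box. Your argument is the classical Krasnoselskii-style reduction and it closes at every step: the uniform constant $\lambda$ from equicontractivity makes each $F(\cdot,y)$ a contraction of the complete set $D$, Banach's theorem yields the selection $\sigma:\overline{J(D)}\to D$, the displayed estimate (which genuinely needs the $y$-independence of $\lambda$) gives continuity of $\sigma$, and Schauder's theorem applies to the continuous map $\sigma\circ J$ with relatively compact range. One point deserves emphasis: your step ``$J$ compact $\Rightarrow\overline{J(D)}$ compact'' requires reading ``compact operator'' as ``$J(D)$ is relatively compact,'' not merely ``$J$ maps bounded sets to relatively compact sets.'' Under the weaker reading the theorem is false for unbounded $D$: take $\mathcal{P}=\mathcal{Q}=\mathbb{R}$, $D=\mathbb{R}$, $J=\mathrm{id}$, and $F(z,y)=y+1$; the family $\left\{F(\cdot,y)\right\}$ is equicontractive with constant $0$, yet $F(z,J(z))=z+1$ has no fixed point. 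The stronger reading is the intended one, and it is also what the paper actually verifies before invoking the theorem: in Lemma \ref{lemma} the domain is the bounded set $D_{\rho}$ and $J(D_{\rho})$ is shown to be relatively compact via Arzel\`a--Ascoli, so your proof covers exactly the situation in which the theorem is applied.
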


\section{Existence and Uniqueness of Solutions}\label{Existence and Uniqueness of Solutions}

In this section, the proofs of the existence and uniqueness of the solution for system \eqref{eqExist} are presented. To apply Karakostas' fixed-point theorem, the operators $J$ and $F$ are defined. Then, a fixed point on a subset of $PC_{r\tau}$ for equation \eqref{fp} is proved. Therefore, the problem of finding a solution of the form \eqref{solution} becomes a fixed-point problem.

Consider the following continuous operators
\begin{align*}
	J:PC_{r\tau}(\mathcal{J};\mathbb{R}^n) &\longrightarrow PC_{r\tau}(\mathcal{J};\mathbb{R}^n),\\
	F: PC_{r\tau}(\mathcal{J};\mathbb{R}^n) \times PC_{r\tau}&(\mathcal{J};\mathbb{R}^n) \longrightarrow PC_{r\tau}(\mathcal{J};\mathbb{R}^n),
\end{align*}
and a fixed $\eta \in\mathbb{R}^n$. For $y,\ z\in PC_{r\tau}$,
\begin{equation}\label{OperatorJ}
	J(y)(t)=\begin{cases}
		\displaystyle \boldsymbol{U}(t,0)\left[\phi(0)-g(y_{\theta_1},y_{\theta_2},\dots, y_{\theta_q})(0)\right]\\
		+ \displaystyle\int_{0}^{t}\boldsymbol{U}(t,s)f(s,y_{s})ds, \quad &  t \in (0,t_1],\\
		\boldsymbol{U}(t,s_i)G_i(s_i,y(s_i)) \\
		\displaystyle + \int_{s_i}^{t}\boldsymbol{U}(t,s)f(s,y_{s})ds,\quad &  t \in (s_i,t_{i+1}],\ i\in I_N ,\\
		\eta,  \quad &t \in \bigcup\limits_{i=1}^N (t_i,s_i],\\
		\phi(t),  \quad &t \in [-r,0],
	\end{cases}
\end{equation}

\begin{equation}\label{OperatorF}
	F(z,y)(t)= \begin{cases}
		y(t) ,  \quad &t\in \bigcup\limits_{i=0}^N (s_i,t_{i+1}],\\
		G_i (t,z(t)),  \quad &t\in (t_i,s_{i}],\ i\in I_N ,\\
		\phi(t)-g(z_{\theta_1},z_{\theta_2},\dots, z_{\theta_q})(t),  \quad &t \in [-r,0].
	\end{cases}
\end{equation}

From the $J$ and $F$ definition, the following fixed-point equation is equivalent to solve system \eqref{eqExist}:
\begin{equation} \label{fp}
	F(z,J(z)) = z,\quad z\in PC_{r\tau}.
\end{equation}

First, it is observed that $J$ is compact, and the set $\left\{F(\cdot, y)\ :\ y \in \overline{J(D_{\rho})}\right\}$ is equicontractive, where
\begin{equation}\label{Dset}
	D_{\rho}=D_{\rho}(\tau, \phi):= \left\{\varphi \in PC_{r\tau}(\mathcal{J};\mathbb{R}^n)\ :\ \|\varphi - \tilde{\phi}\| \leq \rho\right\},\quad \text{ for } \rho >0.
\end{equation}

This set is closed and convex, and $\tilde{\phi}$ is given by \eqref{phi_tilde}. So, the hypotheses of Theorem \ref{Karakostas} are satisfied. Lemma \ref{lemma} highlights the relevance of hypotheses \textbf{H1}, \textbf{H2}, and how they fit into the main results. Theorems \ref{existence}, \ref{uniqueness} and \ref{prolongation} follow on this foundation. 

\begin{lemma} \label{lemma}
	Let hypotheses \textbf{H1} and \textbf{H2} be satisfied. Then, the operators $J$ and $F$ satisfy the following assertions:
	\begin{enumerate}
		\item[(i)] $J$ is continuous.
		\item[(ii)] $J$ maps bounded sets onto bounded sets.
		\item[(iii)] $J$ maps bounded sets onto equicontinuous sets.
		\item[(iv)] $J$ is a compact operator.
		\item[(v)] The set $\left\{F(\cdot,y): y\in\overline{J(D_{\rho})}\right\}$ is comprised of equicontractive operators, with $D_{\rho}$ as in \eqref{Dset}.
	\end{enumerate}
\end{lemma}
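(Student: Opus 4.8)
The plan is to verify each of the five assertions in turn, exploiting the piecewise structure of $J$ and $F$ together with the bounds on the evolution operator $\boldsymbol{U}$ and hypotheses \textbf{H1}, \textbf{H2}. Throughout I would work piece by piece across the intervals $(0,t_1]$, $(s_i,t_{i+1}]$, $\bigcup_i(t_i,s_i]$, and $[-r,0]$, since on each piece the operators are given by an explicit formula. The overarching observation is that continuity and boundedness on each piece, uniformly in the parameter, assemble into the corresponding global statement because the supremum norm on $PC_{r\tau}$ is the maximum of the suprema over finitely many pieces.

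For (i), I would fix $y_n\to y$ in $PC_{r\tau}$ and estimate $\|J(y_n)(t)-J(y)(t)\|$ on each piece. On $(0,t_1]$ the difference is controlled by $\|\boldsymbol{U}(t,0)\|\,N_q q\,\|y_n-y\|$ from the Lipschitz bound on $g$ in \textbf{H1}(i), plus an integral term bounded using the first inequality in \textbf{H2}; since $K$ is continuous and non-decreasing and $\|y_n\|,\|y\|$ stay bounded, this term vanishes as $n\to\infty$. The pieces $(s_i,t_{i+1}]$ are handled identically using the Lipschitz bound on $G_i$ from \textbf{H1}(ii) and the same integral estimate, while the remaining two pieces are constant ($\eta$) or independent of $y$ ($\phi$), hence trivial. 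For (ii), the same formulas with $G_i(\cdot,0)=0$ and $g(0)=0$ give $\|J(y)(t)\|\le M\|\phi(0)\|+MN_q q\|y\|+M\tau\Psi(\|y\|)$ on the first two pieces, and $\|\eta\|$ or $\|\phi\|$ elsewhere, all uniformly bounded once $\|y\|$ is bounded, using the second bound in \textbf{H2}.

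Assertion (iii) is where I expect the real work, and it is the natural obstacle. I would show that for $y$ ranging over a bounded set $B$ and for $t,t'$ in the same subinterval with $|t-t'|$ small, $\|J(y)(t)-J(y)(t')\|$ is small uniformly in $y\in B$. The integral terms require splitting $\int_{s_i}^{t}-\int_{s_i}^{t'}$ into the tail $\int_{t'}^{t}$, bounded by $M\Psi(\sup_{y\in B}\|y\|)\,|t-t'|$, plus a term involving $\|\boldsymbol{U}(t,s)-\boldsymbol{U}(t',s)\|$ integrated against the bounded integrand; here I would invoke the (uniform) continuity of $(t,s)\mapsto\boldsymbol{U}(t,s)$ on the compact set $0\le s\le t\le\tau$, which is available in the finite-dimensional setting since $\Phi$ and $\Phi^{-1}$ are continuous. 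The boundary terms $\boldsymbol{U}(t,s_i)G_i(s_i,y(s_i))$ and $\boldsymbol{U}(t,0)[\phi(0)-g(\cdots)(0)]$ are continuous in $t$ with modulus independent of $y\in B$ for the same reason, since the $y$-dependent factor is bounded by (ii). Care must be taken at the impulse points $t_k$: equicontinuity is required only within each subinterval of continuity, consistent with the definition of $PC_{r\tau}$, so the jumps at $t_k$ do not obstruct the argument.

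Assertion (iv) then follows immediately from (i)–(iii) by the Arzelà--Ascoli theorem applied piecewise: $J$ is continuous, and it maps the bounded set $D_\rho$ into a set that is bounded and equicontinuous on each piece (with controlled one-sided limits at the $t_k$), hence relatively compact in $PC_{r\tau}$. Finally, for (v) I would fix $y\in\overline{J(D_\rho)}$ and estimate $\|F(z_1,y)(t)-F(z_2,y)(t)\|$ for $z_1,z_2\in D_\rho$. On $\bigcup_i(s_i,t_{i+1}]$ the value is $y(t)$, independent of $z$, giving zero difference; on $(t_i,s_i]$ the difference is $\le L\|z_1-z_2\|$ by \textbf{H1}(ii); on $[-r,0]$ it is $\le N_q q\,\|z_1-z_2\|$ by \textbf{H1}(i). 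Taking the supremum over $t$ yields the uniform contraction constant $\max\{L,\,N_q q\}\le L+N_q q<\tfrac12<1$, independent of $y$, which is precisely equicontractivity of the family $\{F(\cdot,y):y\in\overline{J(D_\rho)}\}$.
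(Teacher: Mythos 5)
Your proposal is correct and follows essentially the same route as the paper's own proof: piecewise estimates on $[-r,0]$, $(0,t_1]$, $(s_i,t_{i+1}]$, and $(t_i,s_i]$ using the Lipschitz bounds of \textbf{H1} and the growth/Lipschitz bounds of \textbf{H2}, uniform continuity of the evolution operator on the compact triangle for equicontinuity, a piecewise Arzel\`a--Ascoli (successive subsequence extraction over the finitely many subintervals) for compactness, and the observation that the contraction constant $\max\{L, N_q q\}\le L+N_q q<\tfrac12$ is independent of $y$ for equicontractivity. No gaps worth noting.
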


\begin{proof}
	\begin{enumerate}
		\item[(i)] \textit{$J$ is continuous.}\vspace{.3cm}
		
		Taking $y,\ z \in PC_{r\tau}$, trivially, for $t\in[-r,0]$,
		\begin{equation*}
			\|J (z)(t)-J(y)(t)\|_{\mathbb{R}^n} =\|\phi(t)-\phi(t)\|_{\mathbb{R}^n}=0.
		\end{equation*}
		
		Thus,
		\begin{equation}\label{r0}
			\sup_{t\in [-r,0]} \|J (z)(t)-J(y)(t)\|_{\mathbb{R}^n}= 0.
		\end{equation}
		
		By \textbf{H1}, \textbf{H2}, and $t \in (0,t_1]$, the following estimate holds:
		\begin{equation*}
			\begin{split}
				\|J(z)(t)-J(y)(t)\|_{\mathbb{R}^n}
				& \leq M\|g(y_{\theta_1},\dots, y_{\theta_q})(0)-g(z_{\theta_1},\dots, z_{\theta_q})(0)\|_{\mathbb{R}^n}\\
				& + M \int_{0}^{t}\|(f(s,z_s)-f(s,y_s))\|_{\mathbb{R}^n}ds\\
				& \leq MN_{q}\sum_{i=1}^q\|y-z\|\\ 
				& + M \int_{0}^{t}K\left(\|z_s\|,\|y_s\|\right)\|z_s-y_s\|ds\\
				& \leq MN_{q} q \|z-y\|+M t_1 K(\|z\|,\|y\|)\|z-y\|.
			\end{split}
		\end{equation*}
		
		Taking the sup,
		\begin{equation}\label{r1}
			\sup_{t\in (0,t_1]}\|J(z)(t)-J(y)(t)\|  \leq  M\left[ N_{q} q+ t_1K(\|z\|,\|y\|)\right]\|z-y\|.
		\end{equation}
		
		Now, for $i\in I_N$ and $t\in (s_i,t_{i+1}]$,
		\begin{equation*}
			\begin{split}
				\|J(z)(t)-J(y)(t) \|_{\mathbb{R}^n}
				& \leq M\|G_i(s_i,z(s_i))- G_i(s_i,y(s_i) )\|_{\mathbb{R}^n}\\
				& + M \int_{s_i}^{t}\|(f(s,z_s)-f(s,y_s))\|_{\mathbb{R}^n}ds\\
				& \leq M  L \|z(s_i)-y(s_i)\|_{\mathbb{R}^n}\\
				& + M \int_{s_i}^{t}K(\|z_s\|,\|y_s\|)\|z_s-y_s\|ds\\
				& \leq M L \|z-y\|+M \tau K(\|z\|,\|y\|)\|z-y\|.
			\end{split}
		\end{equation*}
		
		Thus,
		\begin{equation*}
			\sup_{t\in (s_i,t_{i+1}]}\|J(z)(t)-J(y)(t)\|  \leq M\left[L + \tau K(\|z\|,\|y\|) \right]\|z-y\|.
		\end{equation*}
		
		Together with \eqref{r0}, \eqref{r1}, and since $J$ is constant on $\bigcup\limits_{i=1}^{N} (t_i,s_i]$, it yields that there exists $N_{y,z}>0$ such that:
		\begin{equation*}
			\|J(z)-J(y)\|\leq N_{y,z}\|z-y\|.
		\end{equation*}
		
		Hence, $J$ is continuous. And, in fact, it is Lipschitz continuous.\vspace{.3cm}
		
		\item[(ii)] \textit{$J$ maps bounded sets onto bounded sets.}\vspace{.3cm}
		
		Without loss of generality, set $R >0$ arbitrarily and prove that there exists $d >0$ such that, for every $y \in B_{R} =\overline{B_{R}(0)}=\left\{ z \in PC_{r\tau}\ :\ \|z\| \leq R\right\}$, it follows that $\|J(y) \| \leq d$.
		
		For $t\in[-r,0]$, it gives that:
		\begin{equation*}
			\|J(y)(t) \|_{\mathbb{R}^n} = \|\phi(t)\|_{\mathbb{R}^n} \leq \|\phi\|=:d_0.
		\end{equation*}
		
		Let $y \in B_{R}$ and $t \in (0,t_1]$. \textbf{H2} yields
		\begin{equation*}
			\begin{split}
				\|J(y)(t) \|_{\mathbb{R}^n} & \leq \left\|\boldsymbol{U}(t,0)\left[\phi(0)-g(y_{\theta_1},\dots, y_{\theta_q})(0)\right]\right\|_{\mathbb{R}^n}\\
				&+ \int_{0}^{t}\|\boldsymbol{U}(t,s)f(s,y_s)\|_{\mathbb{R}^n}ds\\
				& \leq M \left(\|\phi(0)\|_{\mathbb{R}^n}+ N_{q} q\|y\|\right) + M t_1\Psi(\|y\|)\\
				& \leq M\left(\|\phi(0)\|_{\mathbb{R}^n}+ N_{q} qR\right) + M t_1\Psi(R)=:d_1.
			\end{split}
		\end{equation*}
		
		Similarly,  for each $i\in I_N$, if $t\in (s_i,t_{i+1}]$, then,
		\begin{equation*}
			\begin{split}
				\|J(y)(t) \|_{\mathbb{R}^n} & \leq \|\boldsymbol{U}(t,s_i)G_i(s_i,y(s_i))\|_{\mathbb{R}^n} + \int_{s_i}^{t}\|\boldsymbol{U}(t,s)f(s,y_s)\|_{\mathbb{R}^n}ds\\
				& \leq ML\|y\|+ M (t_{i+1}-s_i)\Psi(\|y||)\\
				& \leq M LR + M \tau \Psi(R)=:d_2.
			\end{split}
		\end{equation*}
		
		Finally, whenever $t\in (t_i,s_i]$, for $i\in I_N$, it follows that:
		\begin{equation*}
			\|J(t)\|_{\mathbb{R}^n}=\|\eta\|=:d_3.
		\end{equation*}
		
		Taking $d=\displaystyle\max_{0\leq i\leq3}\{d_i\}$, boundedness is proved.\vspace{.3cm}
		
		\item[(iii)] \textit{$J$ maps bounded sets onto equicontinuous sets.}\vspace{.3cm}
		
		Let $B_{R}$ as in (ii), and $y\in B_R$, arbitrary. For some $ 0<\nu_1 <\nu_2\leq t_1$, the following estimate holds:
		\begin{equation}\label{eq3.a}
			\begin{split}
				\|J(y)(\nu_2)&-J(y)(\nu_1) \|_{\mathbb{R}^n}\\
				&\leq \left\|\left[\boldsymbol{U}(\nu_2,0)-\boldsymbol{U}(\nu_1,0)\right]\left[\phi (0)-g(y_{\theta_1},\ldots,y_{\theta_q})(0)\right]\right\|_{\mathbb{R}^n} \\
				&+\left\|\int_{0}^{\nu_2}\boldsymbol{U}(\nu_2,s)f(s,y_s)ds-\int_{0}^{\nu_1}\boldsymbol{U}(\nu_1,s)f(s,y_s)ds\right\|_{\mathbb{R}^n} \\
				& \leq \left\|\boldsymbol{U}(\nu_2,0)-\boldsymbol{U}(\nu_1,0)\right\|\left(\| \phi(0)\|_{\mathbb{R}^n}+N_{q}\displaystyle\sum_{i=1}^q \|y_{\theta_i}(0)\| _{\mathbb{R}^n}\right) \\
				&  +  \int_{0}^{\nu_1}\left\|\left(\boldsymbol{U}(\nu_2,s)-\boldsymbol{U}(\nu_1,s)\right)f(s,y_s)\right\|_{\mathbb{R}^n}ds \\
				& +  \int_{\nu_1}^{\nu_2}\|\boldsymbol{U}(\nu_2,s)f(s,y_s)\|_{\mathbb{R}^n}ds  \\
				& \leq  \left\|\boldsymbol{U}(\nu_2,0)-\boldsymbol{U}(\nu_1,0)\right\| \left(\| \phi(0) \|_{\mathbb{R}^n}+N_{q}qR\right) \\
				& + \Psi(R)\int_{0}^{\nu_1}\|\boldsymbol{U}(\nu_2,s)-\boldsymbol{U}(\nu_1,s)\|ds +  M\Psi (R)(\nu_2-\nu_1).
			\end{split}
		\end{equation}
		
		Similarly, for each $i\in I_N$ and every $\nu_1,\nu_2$, with $s_i <\nu_1 <\nu_2 \leq t_{i+1}$, it follows that:
		\begin{equation}
			\begin{split}
				\|J(y)(\nu_2)&-J(y)(\nu_1) \|_{\mathbb{R}^n}\\
				& \leq \|\boldsymbol{U}(\nu_2,s_i)-\boldsymbol{U}(\nu_1,s_i)\| \| G_i(s_i, y(s_i)) \|_{\mathbb{R}^n} \\
				& + \left\|\int_{s_i}^{\nu_2} \boldsymbol{U}(\nu_2,s)f(s,y_s)ds-\int_{s_i}^{\nu_1}\boldsymbol{U}(\nu_1,s)f(s,y_s)ds\right\|_{\mathbb{R}^n}  \\
				& \leq \|\boldsymbol{U}(\nu_2,s_i)-\boldsymbol{U}(\nu_1,s_i)\|  \, L\| y\|  \\
				&  + \int_{s_i}^{\nu_1}\|\boldsymbol{U}(\nu_2,s)-\boldsymbol{U}(\nu_1,s)\| \Psi(\| y_s \|) ds + M  \int_{\nu_1}^{\nu_2} \Psi(\| y_s \|) ds \\
				& \leq  \|\boldsymbol{U}(\nu_2,s_i)-\boldsymbol{U}(\nu_1,s_i)\| L R  \\
				&+ \Psi(R)\int_{s_i}^{\nu_1}\|\boldsymbol{U}(\nu_2,s)-\boldsymbol{U}(\nu_1,s)\|ds + M\Psi(R)(\nu_2-\nu_1). \label{eq3.b}
			\end{split}
		\end{equation}
		
		By \eqref{eq3.a} and \eqref{eq3.b}, the continuity and boundedness of $\boldsymbol{U}(t,s)$ yield that, as $\nu_2$ approaches to $\nu_1$, $\|J(y)(\nu_2)-J(y)(\nu_1) \|_{\mathbb{R}^n}$ goes to zero, independently of $y$. Therefore, $J(B_{R})$ is equicontinuous on the set $\bigcup\limits_{i=0}^N (s_i,t_{i+1}]$. In the same fashion, equicontinuity on $[-r,0]$ and $\bigcup\limits_{i=1}^N (t_i,s_i]$ is obtained. And, the family of functions $J(B_{R})$ is equicontinuous on the interval $\mathcal{J}\backslash \{t_1,\ldots,t_N\}$.\vspace{.3cm}
		
		\item[(iv)] \textit{$J$ is a compact operator.}\vspace{.3cm}
		
		Let $B\subset PC_{r\tau}$ be a bounded subset, and $\left\{\omega_{n}\right\}_{n\in\mathbb{N}}$, a sequence on $J(B)$. Then, (ii) and (iii) imply that it is uniformly bounded and equicontinuous on $[-r, t_1]$. Note that $\left\{\omega_n|_{[-r,0]} \right\}_{n\in\mathbb{N}} = \{\phi \}$. Arzelà-Ascoli theorem on \newline $\left\{\omega_n|_{[0,t_1]} \right\}_{n\in\mathbb{N}}\subset C\left([0,t_1];\mathbb{R}^n\right)$ implies there is a uniformly convergent subsequence $\left\{\omega^{1}_{n}\right\}_{n\in\mathbb{N}}$ on $[-r, t_1]$.
		
		Consider the sequence $\left\{\omega^{1}_{n}\right\}_{n\in\mathbb{N}}$ on the interval $[s_1, t_2]$. It is uniformly bounded and equicontinuous, and as before, it has a convergent subsequence $\left\{\omega^{2}_{n}\right\}_{n\in\mathbb{N}}$ on $[s_1, t_2]$. Therefore, a uniformly convergent subsequence $\left\{\omega^{2}_{n}\right\}_{n\in\mathbb{N}}$ of $\left\{ \omega_n \right\}_{n\in\mathbb{N}}$ on the interval $[-r,t_2]$ is obtained, since each $\omega^2_n$ has the same definition on $[t_1,s_1]$.
		
		Continuing this process on the intervals $[t_2,s_2],\,[s_2, t_3],\, [t_3, s_3], \ldots, [s_N, \tau]$, it is concluded that there is a subsequence $\left\{\omega^{N+1}_{n}\right\}_{n\in\mathbb{N}}$ of $\left\{ \omega_n\right\}_{n\in\mathbb{N}}$, uniformly convergent on $[-r, \tau]$. Thus, the set $\overline{J(B)}$ is compact, and by the characterization of sequentially compact spaces, $J$ is compact.\vspace{.3cm}
		
		\item[(v)] \textit{The set $\left\{F(\cdot, y)\ :\ y \in  \overline{J(D_{\rho})}\right\}$ is comprised of equicontractive operators.}\vspace{.3cm}
		
		Let $\rho>0$, $y\in \overline{J(D_{\rho})}$, $\ x,\ z \in PC_{r\tau}$, and $t \in [-r, 0]$. Thus, \textbf{H1} yields
		\begin{equation}\label{eq5.a}
			\begin{split}
				\|F(z, y)(t)-F(x, y)&(t) \|_{\mathbb{R}^n}\\
				& \leq \left\|g(x_{\theta_1},\dots, x_{\theta_q})(t)-g(z_{\theta_1},\dots, z_{\theta_q})(t)\right\|_{\mathbb{R}^n} \\
				& \leq N_{q} q\|z-x\|.
			\end{split}
		\end{equation}
		
		For each $i\in I_N$ and $t \in (t_i, s_i]$, it follows that:
		\begin{equation} \label{eq5.b}
			\begin{split}
				\|F(z, y)(t)-F(x, y)(t) \|_{\mathbb{R}^n}
				& \leq \|G_i(t,z(t))-G_i(t,x(t))\|_{\mathbb{R}^n} \\
				& \leq L \|z-x \|.
			\end{split}    
		\end{equation}
		
		Moreover, on the intervals $(s_i,t_{i+1}],\ i\in\{ 0\}\cup I_N$, it follows that:
		\begin{equation}\label{eq5.c}
			\|F(z, y)(t)-F(x, y)(t)\|_{\mathbb{R}^n} =\|y(t)-y(t)\|_{\mathbb{R}^n}=0.
		\end{equation}
		
		Combining \eqref{eq5.a}-\eqref{eq5.c}, the next estimate holds:
		\begin{equation*}
			\|F(z, y)-F(x, y) \| \leq\frac{1}{2}\|z-x \|.
		\end{equation*}
		
		Hence, $F$ is a contraction on the first variable, independently of $y \in \overline{J(D_{\rho})}$.
	\end{enumerate}

\end{proof}

\begin{theorem}\label{existence}
	Assume \textbf{H1} - \textbf{H3}. Then, problem \eqref{eqExist} has, at least, one solution on the interval $\mathcal{J}=[-r,\tau]$.
\end{theorem}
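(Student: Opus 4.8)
The plan is to cast the existence problem as the fixed-point equation \eqref{fp} on the closed convex set $D_\rho$ of \eqref{Dset}, and to invoke Karakostas' theorem (Theorem \ref{Karakostas}) with $\mathcal{P}=\mathcal{Q}=PC_{r\tau}$, $D=D_\rho$, and the operators $J$, $F$ from \eqref{OperatorJ}--\eqref{OperatorF}. Lemma \ref{lemma} already supplies three of the four structural hypotheses of that theorem: items (i) and (iv) give that $J$ is a continuous compact operator, while item (v) gives that $\{F(\cdot,y):y\in\overline{J(D_\rho)}\}$ is equicontractive (a $\tfrac12$-contraction in the first argument). The joint continuity of $F$ on $D_\rho\times\overline{J(D_\rho)}$ is immediate, since on $\bigcup_{i}(s_i,t_{i+1}]$ one has $F(z,y)=y$, and on the remaining subintervals $F$ depends on $z$ only through the Lipschitz maps $g$ and $G_i$ of \textbf{H1}.

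The one hypothesis of Theorem \ref{Karakostas} not yet verified, and the crux of the argument, is the self-mapping property $F\colon D_\rho\times\overline{J(D_\rho)}\to D_\rho$; this is exactly where \textbf{H3} enters. First I would fix $z\in D_\rho$ and $y\in\overline{J(D_\rho)}$ and estimate $\|F(z,y)(t)-\tilde\phi(t)\|_{\mathbb{R}^n}$ separately on the four pieces on which $\tilde\phi$ in \eqref{phi_tilde} is defined. On $[-r,0]$ one has $F(z,y)(t)-\tilde\phi(t)=-g(z_{\theta_1},\dots,z_{\theta_q})(t)$, so \textbf{H1}(i) with $g(0)=0$ and $\|z\|\le\|\tilde\phi\|+\rho$ gives the bound $N_q q(\|\tilde\phi\|+\rho)$, which is $\le\rho$ by \textbf{H3}(i) since $M\ge1$. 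On $\bigcup_{i=1}^N(t_i,s_i]$ one has $F(z,y)(t)=G_i(t,z(t))$ and $\tilde\phi(t)=\beta$, so $G_i(\cdot,0)=0$ and \textbf{H1}(ii) yield $L(\|\tilde\phi\|+\rho)+\|\beta\|_{\mathbb{R}^n}\le\rho$, which is precisely \textbf{H3}(iii).

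The two remaining pieces lie on $\bigcup_{i=0}^N(s_i,t_{i+1}]$, where $F(z,y)(t)=y(t)$; here I would exploit that $y\in\overline{J(D_\rho)}$, bounding instead $\|J(w)(t)-\tilde\phi(t)\|_{\mathbb{R}^n}$ on these pieces for $w\in D_\rho$ and then passing to the closure (uniform convergence transfers the bound on each subinterval). On $(0,t_1]$ one finds $J(w)(t)-\tilde\phi(t)=-\boldsymbol{U}(t,0)g(w_{\theta_1},\dots,w_{\theta_q})(0)+\int_0^t\boldsymbol{U}(t,s)f(s,w_s)\,ds$, so $\|\boldsymbol{U}\|\le M$, \textbf{H1}(i), and the growth estimate in \textbf{H2} give $MN_q q(\|\tilde\phi\|+\rho)+M\tau\Psi(\|\tilde\phi\|+\rho)\le\rho$, i.e. \textbf{H3}(i); on $\bigcup_{i=1}^N(s_i,t_{i+1}]$ the constant value of $\tilde\phi$ is $\alpha$ and the same computation with the impulse term $\boldsymbol{U}(t,s_i)G_i(s_i,w(s_i))$ produces $ML(\|\tilde\phi\|+\rho)+\|\alpha\|_{\mathbb{R}^n}+M\tau\Psi(\|\tilde\phi\|+\rho)\le\rho$, i.e. \textbf{H3}(ii). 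Taking the supremum over $t\in\mathcal{J}$ shows $\|F(z,y)-\tilde\phi\|\le\rho$, hence $F(z,y)\in D_\rho$. With all four hypotheses checked, Theorem \ref{Karakostas} yields $z^\ast\in D_\rho$ with $F(z^\ast,J(z^\ast))=z^\ast$, which is \eqref{fp}; Proposition \ref{characterization} then identifies $z^\ast$ with a solution of \eqref{eqExist} on $\mathcal{J}=[-r,\tau]$. I expect the main obstacle to be purely the bookkeeping of matching each piece of the estimate to the correct clause of \textbf{H3} and keeping track of which component of $y$ is actually used by $F$ (only the values on $\bigcup_i(s_i,t_{i+1}]$), since $\overline{J(D_\rho)}$ itself need not be contained in $D_\rho$ on the impulse intervals; conceptually the whole theorem reduces to the invariance of $D_\rho$ under $F$.
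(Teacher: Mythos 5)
Your proposal is correct and follows essentially the same route as the paper: restrict $J$ and $F$ to the closed convex set $D_\rho$, invoke Lemma \ref{lemma} for continuity, compactness, and equicontractiveness, verify the invariance of $D_\rho$ piecewise on the four subintervals using exactly \textbf{H3}(i)--(iii), and conclude via Theorem \ref{Karakostas} together with Proposition \ref{characterization}. If anything, you are slightly more careful than the paper, whose proof checks the target inclusion only for the diagonal pairs $\left(z,\widetilde{J}(z)\right)$, whereas you bound $\|F(z,y)-\tilde{\phi}\|$ for arbitrary $y\in\overline{J(D_\rho)}$ by passing the estimates on $J(D_\rho)$ to the closure, which is what the hypothesis of Karakostas' theorem formally requires.
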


\begin{proof}
	For $\rho>0$, let $D_{\rho}$ as in \eqref{Dset}, and define the operators $\widetilde{J}$ and $\widetilde{F}$ as:
	\begin{equation*}
		\widetilde{J}=J\big|_{D_{\rho}}:D_{\rho}\longrightarrow PC_{r\tau}(\mathcal{J};\mathbb{R}^n)\quad\text{and}\quad \widetilde{F}=F\big|_{D_{\rho}\times\overline{\widetilde{J}(D_{\rho})}}:D_{\rho}\times\overline{\widetilde{J}(D_{\rho})}\longrightarrow D_{\rho}.
	\end{equation*}
	
	Because of Lemma \ref{lemma}, $\widetilde{J}$ is continuous and compact, and the family\\ $\left\{F(\cdot, y)\ :\ y \in  \overline{J(D_{\rho})}\right\}$ is equicontractive. Continuity of $\widetilde{F}$ follows analogously. The goal is to prove that, indeed, $\widetilde{F}\left(D_{\rho}, \overline{\widetilde{J}(D_{\rho})}\right) \subset D_{\rho}.$ Thus, Theorem \ref{Karakostas} assumptions will be satisfied, and an equivalent solution will be obtained.
	
	Take an arbitrary $z \in D_{\rho}$, for $t\in[-r,0]$, it yields
	\begin{equation} \label{eqT.a}
		\begin{split}
			\left\|\widetilde{F}\left(z,\widetilde{J}(z)\right)(t)-\tilde{\phi}(t)\right\|_{\mathbb{R}^n}&= \|g(z_{\theta_1},\ldots,z_{\theta_q})(t)\|_{\mathbb{R}^n} \\
			&\leq N_q\sum_{j=1}^q\|z_{\theta_j}(t)\|_{\mathbb{R}^n} \\
			&\leq MN_qq\|z\| \\
			&\leq MN_qq(\|\tilde{\phi}\|+\rho) \leq \rho. 
		\end{split}
	\end{equation}
	
	Similarly, $t \in (0,t_1]$ imply
	\begin{equation} \label{eqT.b}
		\begin{split}
			\left\|\widetilde{F}\left(z,\widetilde{J}(z)\right)(t)-\tilde{\phi}(t)\right\|_{\mathbb{R}^n}
			& \leq M \left\|g(z_{\theta_1},\dots, z_{\theta_q})(0)\right\|_{\mathbb{R}^n}\\
			& + \int_{0}^{t}\left\|\boldsymbol{U}(t,s)f(s,z_s)\right\|_{\mathbb{R}^n} ds \\
			& \leq MN_{q}\sum_{i=1}^q\|z\|+ M \int_{0}^{t} \|f(s,z_s)\|ds \\
			& \leq MN_{q}q\|z\| + M t_1 \Psi(\|z||) \\
			& \leq MN_{q}q\left(\left\|\tilde{\phi}\right\|+\rho\right)+M \tau \Psi\left(\left\|\tilde{\phi}\right\|+\rho\right) \leq \rho.
		\end{split}
	\end{equation}
	
	Likewise, $t \in (s_i,t_{i+1}],\ i\in I_N$, gives
	\begin{equation} \label{eqT.c}
		\begin{split}
			\left\|\widetilde{F}\left(z,\widetilde{J}(z)\right)(t)-\tilde{\phi}(t)\right\|_{\mathbb{R}^n}
			& \leq \|\boldsymbol{U}(t,s_i)G_i(s_i,z(s_i))- \alpha \|_{\mathbb{R}^n} \\
			& + \int_{s_i}^{t}\left\|\boldsymbol{U}(t,s)f(s,z_s)\right\|_{\mathbb{R}^n} ds \\
			& \leq M L\|z\|+\| \alpha\|_{\mathbb{R}^n} + M (t_{i+1}-s_i) \Psi(\|z||) \\
			& \leq M L\left(\left\|\tilde{\phi}\right\|+\rho\right)+ \|\alpha\|_{\mathbb{R}^n} + M \tau \Psi\left(\left\|\tilde{\phi}\right\|+\rho\right)\\
			& \leq \rho.
		\end{split}
	\end{equation}
	
	Additionally, for $i\in I_N$, if $t \in (t_i,s_i]$, then,
	\begin{equation} \label{eqT.d}
		\begin{split}
			\left\|\widetilde{F}\left(z,\widetilde{J}(z)\right)(t)-\tilde{\phi}(t)\right\|_{\mathbb{R}^n} & = \|G_i(t,z(t)) -\beta \|_{\mathbb{R}^n} \\
			& \leq L\|z\|+\| \beta\|_{\mathbb{R}^n} \\
			&\leq L\left(\left\|\tilde{\phi}\right\|+\rho\right) +\| \beta\|_{\mathbb{R}^n}  \leq \rho.
		\end{split}
	\end{equation}
	
	Thus, equations \eqref{eqT.a} through \eqref{eqT.d} give
	\begin{equation*}
		\left\|\widetilde{F}\left(z,\widetilde{J}(z)\right)-\tilde{\phi}\right\|=\sup_{t\in\mathcal{J}}\left\|\widetilde{F}\left(z,\widetilde{J}(z)\right)(t)-\tilde{\phi}(t)\right\|_{\mathbb{R}^n}\leq\rho.
	\end{equation*}
	
	Applying Theorem \ref{Karakostas} to $\widetilde{J}$ and $\widetilde{F}$, it follows $
	\widetilde{F}\left(z,\widetilde{J}(z)\right) = z
	$, i.e., there exists a fixed-point solution $z\in D_{\rho}\subset PC_{r\tau}$, equivalent to the system \eqref{eqExist} solution given by Proposition \ref{characterization}.
\end{proof}

The following  theorem proves the uniqueness of the solution to system \eqref{eqExist}.

\begin{theorem}\label{uniqueness}
	Asumming  \textbf{H1} - \textbf{H4}, system  \eqref{eqExist} has a unique solution on $\mathcal{J}=[-r, \tau]$.
\end{theorem}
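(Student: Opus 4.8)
The plan is to leverage the integral characterization together with the sharper Lipschitz constants encoded in \textbf{H4}. Theorem~\ref{existence} already furnishes a solution $z\in D_\rho$; to establish uniqueness I would take two solutions $z_1,z_2\in D_\rho$ of \eqref{eqExist} — equivalently, two functions satisfying the representation \eqref{solution} of Proposition~\ref{characterization}, i.e.\ two fixed points of \eqref{fp} — and show directly that $\|z_1-z_2\|=0$. The crucial point, compared with the continuity estimate in part~(i) of Lemma~\ref{lemma}, is that both solutions lie in $D_\rho$, so that $\|z_1\|,\|z_2\|\le\|\tilde\phi\|+\rho$; since $K$ is non-decreasing in each argument by \textbf{H2}, one may replace $K(\|z_1\|,\|z_2\|)$ throughout by the constant $K(\|\tilde\phi\|+\rho,\|\tilde\phi\|+\rho)$.

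First I would estimate $\|z_1(t)-z_2(t)\|_{\mathbb{R}^n}$ piecewise, following the same four cases as in \eqref{solution}. On $[-r,0]$, subtracting the two representations and applying \textbf{H1}(i) gives the bound $N_q q\,\|z_1-z_2\|$. On each impulse interval $(t_i,s_i]$, \textbf{H1}(ii) yields $L\,\|z_1-z_2\|$. On $(0,t_1]$, combining the Lipschitz estimate for $g$ with that for $f$ and bounding the arguments of $K$ as above produces the coefficient $MN_q q+M\tau K(\|\tilde\phi\|+\rho,\|\tilde\phi\|+\rho)$. On each $(s_i,t_{i+1}]$, the analogous computation with $G_i$ in place of $g$ gives $ML+M\tau K(\|\tilde\phi\|+\rho,\|\tilde\phi\|+\rho)$.

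Taking the supremum over $\mathcal{J}$ then yields $\|z_1-z_2\|\le C\,\|z_1-z_2\|$, where $C$ is the maximum of the four coefficients above. Since $M\ge 1$, the two coefficients coming from \textbf{H4} dominate $N_q q$ and $L$, so $C=\max\{MN_q q+M\tau K(\cdot,\cdot),\,ML+M\tau K(\cdot,\cdot)\}$, and both \textbf{H4}(i) and \textbf{H4}(ii) guarantee $C<1$. Consequently $(1-C)\|z_1-z_2\|\le 0$ forces $\|z_1-z_2\|=0$, that is, $z_1=z_2$.

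I expect the only real subtlety to be the bookkeeping that legitimizes replacing the $K$-arguments by $\|\tilde\phi\|+\rho$: this requires knowing a priori that the two solutions being compared both belong to $D_\rho$, which is exactly the set in which Theorem~\ref{existence} locates solutions and on which \textbf{H3}--\textbf{H4} are posed. A cleaner-looking but problematic route would avoid the representation and instead write $z_1-z_2=F(z_1,J(z_1))-F(z_2,J(z_2))$, splitting it through $F(z_2,J(z_1))$; however, the equicontractivity from Lemma~\ref{lemma}(v) only contributes a factor $\tfrac12$, while the $J$-difference on $\bigcup_i(s_i,t_{i+1}]$ contributes $C$, and $\tfrac12+C$ need not lie below $1$. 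Hence the direct piecewise estimate via \eqref{solution}, which never incurs the spurious $\tfrac12$, is the approach I would commit to.
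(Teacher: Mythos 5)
Your proposal is correct and follows essentially the same route as the paper: both take two solutions in $D_\rho$ satisfying the representation \eqref{solution}, estimate $\|z_1(t)-z_2(t)\|_{\mathbb{R}^n}$ piecewise on the four types of intervals using \textbf{H1}--\textbf{H2}, replace the arguments of $K$ by $\|\tilde\phi\|+\rho$ via monotonicity, and conclude from \textbf{H4} that the supremum estimate is a contraction, forcing $z_1=z_2$. The only cosmetic difference is that the paper bounds the $[-r,0]$ and $(t_i,s_i]$ coefficients by $\tfrac12$ directly via $L+N_qq<\tfrac12$, whereas you absorb them into the \textbf{H4} coefficients using $M\ge 1$; both yield the same conclusion.
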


\begin{proof} Consider two solutions $z_1$ and $z_2$ to  \eqref{eqExist}, which satisfy \eqref{solution}. Let $\rho>0$ such that $z_1,\ z_2\in D_{\rho}$. Then, for $t \in [-r,0]$, the following estimate holds:
	\begin{equation} \label{eqT2.0}
		\begin{split}
			\|z_1(t)-z_2(t)\|_{\mathbb{R}^n}
			& \leq \left\|g\left(z_{2_{\theta_1}},\dots, z_{2_{\theta_q}}\right)(t)-g\left(z_{1_{\theta_1}},\dots, z_{1_{\theta_q}}\right)(t)\right\|_{\mathbb{R}^n} \\
			& \leq N_q q \|z_1-z_2\| \\
			& \leq \frac{1}{2}\|z_1-z_2\|.
		\end{split}
	\end{equation}
	
	If  $t \in (0,t_1]$, \textbf{H2} implies
	\begin{equation} \label{eqT2.1}
		\begin{split}
			\|z_1(t)-z_2(t)\|&_{\mathbb{R}^n} \\
			& \leq \left\|\boldsymbol{U}(t,0)\right\|\left\|g\left(z_{2_{\theta_1}},\ldots, z_{2_{\theta_q}}\right)(0)-g\left(z_{1_{\theta_1}},\ldots, z_{1_{\theta_q}}\right)(0)\right\|_{\mathbb{R}^n} \\
			& + \int_{0}^{t}\left\|\boldsymbol{U}(t,s)\left(
					f\left(s,z_{1_{s}}\right)-f(s,z_{2_s})\right)\right\|ds \\
			& \leq \left[M N_{q} q+ M t_1K\left(\|z_1\|,\|z_2\|\right)\right]\|z_1-z_2\| \\
			&\leq  \left[M N_{q} q+ M \tau K\left(\|\tilde{\phi}\|+\rho,\|\tilde{\phi}\| +\rho\right)\right] \|z_1-z_2\|,
		\end{split}    
	\end{equation}
	and, $t \in (s_{i},t_{i+1}]$, $i\in I_N$, yields
	\begin{equation} \label{eqT2.2}
		\begin{split}
			\|z_1(t)-z_2(t)\|_{\mathbb{R}^n}
			& \leq \|\boldsymbol{U}(t,s_i)\| \|G_i\left(s_i,z_1(s_i)\right) - G_i\left(s_i,z_2(s_i)\right) \|_{\mathbb{R}^n} \\
			& + \int_{s_i}^{t}\left\|\boldsymbol{U}(t,s)\left(f(s,z_{1_{s}})-f(s,z_{2_{s}})\right)\right\|ds \\
			& \leq \left[ M L + M (t_{i+1}-s_i)K\left(\|z_1\|, \|z_2\| \right) \right] \|z_1-z_2\| \\
			& \leq \left[ M L   + M \tau K\left(\left\|\tilde{\phi}\right\|+\rho, \left\|\tilde{\phi}\right\|+\rho \right) \right] \|z_1-z_2\|.
		\end{split}
	\end{equation}
	
	Lastly, if $t \in (t_i,s_i],\ i\in I_N$, then,
	\begin{equation} \label{eqT2.3}
		\begin{split}
			\|z_1(t)-z_2(t)\|_{\mathbb{R}^n}
			& \leq \left\|G_i(t,z_1(t)) - G_i(t,z_2(t))\right\|_{\mathbb{R}^n}  \\
			& \leq L \|z_1-z_2\| \\
			& \leq \frac{1}{2} \|z_1-z_2\|.
		\end{split}
	\end{equation}
	
	Therefore, taking the sup limit of equations \eqref{eqT2.0}-\eqref{eqT2.3} and \textbf{H4} imply that there exists a constant $m$, with $0<m<1$, such that:
	\begin{equation*}
		\begin{split}
			\|z_1-z_2\| & =\sup_{t\in \mathcal{J}}\|z_1(t)-z_2(t)\|_{\mathbb{R}^n}\leq m\|z_1-z_2\|.
		\end{split}
	\end{equation*}
	
	Hence, $z_1=z_2$.
\end{proof}

Finally, the next theorem and corollary extend the system solution towards $[-r, +\infty)$.

\begin{theorem}\label{prolongation}
	Assume \textbf{H1} - \textbf{H4} are satisfied, and consider the solution $z$ over a maximal interval $[-r,p_1)$. Then, $p_1= + \infty$, or there exists a convergent sequence $\left\{\tau_n\right\}_{n\in\mathbb{N}}$ to $p_1$, such that:
	\begin{equation}\label{seq}
		\lim_{n\rightarrow\infty} z(\tau_n) = \tilde{z}\in \partial B_{\left\|\tilde{\phi}\right\|+\rho}\subset \mathbb{R}^n .
	\end{equation}
\end{theorem}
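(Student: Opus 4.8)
The plan is to argue by contradiction, exploiting the a priori bound furnished by the invariant set $D_{\rho}$. First I would record the consequence of working inside $D_{\rho}$: since any solution produced by Theorem~\ref{existence} satisfies $\|z-\tilde{\phi}\|\le\rho$, the pointwise estimate $\|z(t)\|_{\mathbb{R}^n}\le\|\tilde{\phi}\|+\rho=:\Omega$ holds for every $t$ in the interval of existence. Thus the trajectory $\{z(t):t\in[-r,p_1)\}$ is trapped in the closed ball $\overline{B_{\Omega}}\subset\mathbb{R}^n$, and, by Bolzano--Weierstrass, the set of subsequential limits of $z(t)$ as $t\to p_1^{-}$ is a nonempty compact subset of $\overline{B_{\Omega}}$.

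Assume $p_1<\infty$ (otherwise there is nothing to prove) and suppose, for contradiction, that no sequence $\tau_n\to p_1$ satisfies \eqref{seq}. Since all values $z(t)$ lie in $\overline{B_{\Omega}}$, the failure of \eqref{seq} means that no cluster point of $z(t)$ as $t\to p_1^{-}$ lies on the sphere $\partial B_{\Omega}$; hence the (compact) cluster set is contained in the open ball $B_{\Omega}$, and there exist $\varepsilon>0$ and $\delta>0$ with
\begin{equation*}
	\|z(t)\|_{\mathbb{R}^n}\le\Omega-\varepsilon,\qquad t\in(p_1-\delta,p_1).
\end{equation*}
This strict separation from $\partial B_{\Omega}$ is the engine of the argument, since it leaves room to restart the fixed-point construction past $p_1$.

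Next I would continue the solution beyond $p_1$, contradicting maximality. Using the representation \eqref{solution} on the piece of $\mathcal{J}$ containing $p_1$, together with the bound $\|f(s,z_s)\|\le\Psi(\Omega)$ from \textbf{H2} and the continuity and boundedness of $\boldsymbol{U}$, a Cauchy estimate shows that $\lim_{t\to p_1^{-}}z(t)=:z^{\ast}$ exists in $\mathbb{R}^n$ with $\|z^{\ast}\|\le\Omega-\varepsilon$, so $z$ extends continuously to $p_1$. Taking this extended trajectory as the new initial history, I would pose on a short interval $[p_1,p_1+h]$ the same fixed-point problem \eqref{fp}, choosing $h>0$ small and a fresh radius $\rho'$: because the relevant history norm is now at most $\Omega-\varepsilon<\Omega$, the slack $\varepsilon$ permits a choice of $(h,\rho')$ for which the analogues of \textbf{H3} and \textbf{H4} hold, as the $\tau$-proportional terms shrink with $h$. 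Theorem~\ref{existence} then yields a solution on $[-r,p_1+h)$, and Theorem~\ref{uniqueness} forces it to coincide with $z$ on $[-r,p_1)$; this is a genuine prolongation, contradicting the maximality of $[-r,p_1)$. The contradiction establishes \eqref{seq}.

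The main obstacle I anticipate is the restart step: one must handle the piecewise structure at $p_1$ (whether $p_1$ lands in an evolution interval $(s_i,t_{i+1}]$, an impulse interval $(t_i,s_i]$, or coincides with a prescribed node), verify that the delayed argument $z_t$ converges to $z_{p_1}$ in the $PC_r$ norm so the integral term is continuous up to $p_1$, and re-derive the inequalities of \textbf{H3}--\textbf{H4} on the shifted interval with the new history. A small reinterpretation of the non-local term is also needed, since on $[p_1,p_1+h]$ the initial datum is mere continuity at $p_1$ rather than a non-local condition; once this bookkeeping is settled, the contraction and compactness machinery of Lemma~\ref{lemma} applies essentially verbatim.
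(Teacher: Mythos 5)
Your proposal follows essentially the same route as the paper's proof: assume $p_1<\infty$ and that the trajectory stays away from $\partial B_{\|\tilde{\phi}\|+\rho}$ near $p_1$, use the integral representation on the final evolution piece together with the bound $\|f(s,z_s)\|\leq\Psi(\|\tilde{\phi}\|+\rho)$ from \textbf{H2} and the uniform continuity of $\boldsymbol{U}$ to obtain a Cauchy estimate, conclude that $z(p_1^-)$ exists, and contradict the maximality of $[-r,p_1)$ by extension. The only differences are ones of detail: you formalize ``away from the boundary'' via the compact cluster set and Bolzano--Weierstrass, and you make explicit the local fixed-point restart on $[p_1,p_1+h]$, a step the paper compresses into ``extending $z$ by continuity''---your version is, if anything, the more careful reading of what the maximality definition actually requires.
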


\begin{proof}
	Assume $p_1<+\infty$, and suppose that there exists a neighborhood $V$ of the boundary of $B_{\left\|\tilde{\phi}\right\|+\rho}$ such that if $t\in [p_2,p_1)$, with $s_{N}<p_2<p_1$, then, $z(t)\notin V$.
	
	Without loss of generality, assume that $ V=B_{\left\|\tilde{\phi}\right\|+\rho} \backslash E$, with $E \subset  B_{\left\|\tilde{\phi}\right\|+\rho}$ a closed set and $z(t)\in E$, for $t \in [p_2,p_1)$.
	
	Consider $p_2\leq s<t<p_1$. It follows that:
	\begin{equation*}
		\begin{split}
			\|z(t)-z(s)\|_{\mathbb{R}^n}
			& \leq \left\|\boldsymbol{U}(t,s_N)-\boldsymbol{U}(s,s_N)\right\|\left\|G_N(s_N,z(s_N))\right\|_{\mathbb{R}^n}\\
			&  + \int_{s}^t\left\|\boldsymbol{U}(t,\xi)\right\|\left\|f(\xi,z_\xi)\right\|_{\mathbb{R}^n} d\xi\\
			&  + \int_{s_N}^s\left\|\boldsymbol{U}(t,\xi)-\boldsymbol{U}(s,\xi)\right\|\left\|f(\xi,z_\xi)\right\|_{\mathbb{R}^n} d\xi\\
			&\leq \left\|\boldsymbol{U}(t,s_N)-\boldsymbol{U}(s,s_N)\right\|L\|z\| + M(t-s)\Psi\left(\left\|\tilde{\phi}\right\|+\rho\right)\\
			& + \Psi\left(\left\|\tilde{\phi}\right\|+\rho\right) \int_{s_N}^s \left\|\boldsymbol{U}(t,\xi)-\boldsymbol{U}(s,\xi)\right\| d\xi .
		\end{split}
	\end{equation*}
	
	Then, uniform continuity of the evolution operator yields 
	\begin{equation*}
		\lim_{s\rightarrow p_1^-} \|z(t)-z(s)\|_{\mathbb{R}^n} = 0.
	\end{equation*}
	
	Thus, there exists $\tilde{z}\in \mathbb{R}^n$ such that $z(p_1^-)=\tilde{z}\in E$, and a solution can be defined at $p_1$ through extending $z$ by continuity, which contradicts the  maximality of $[-r,p_1)$. Thus, either $p_1=+\infty$, or a sequence $\{\tau_n\}_n$ exists and fulfills \eqref{seq}.
\end{proof}

\begin{corollary} \label{Corollary_prol}
	Under Theorem \ref{prolongation} assumptions, suppose that:
	\begin{equation*}
		\|f(t,\varphi)\|_{\mathbb{R}^n}   \leq h(t)\left(1+\|\varphi (0)\|_{\mathbb{R}^n}\right),
	\end{equation*}
	for $\varphi \in  PC_{r}$ and $h: \overline{\mathbb{R}_+}\longrightarrow\overline{\mathbb{R}_+}$ continuous. Then, there exists a unique solution to problem \eqref{eqExist} on $[-r,+\infty)$.
\end{corollary}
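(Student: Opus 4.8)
The plan is to use the sublinear growth of $f$ to produce an a priori bound that precludes finite-time blow-up, so that the maximal interval of Theorem \ref{prolongation} must be the whole half-line. Uniqueness is inherited from the local theory: two global solutions coincide on $[-r,\tau]$ by Theorem \ref{uniqueness}, and a standard open--closed continuation argument, using uniqueness on each short interval where \textbf{H4} remains valid, propagates the agreement to all of $[-r,+\infty)$. It therefore suffices to establish global existence. Let $[-r,p_1)$ be the maximal interval of existence of the solution $z$ given by Theorems \ref{existence} and \ref{uniqueness}, and suppose, towards a contradiction, that $p_1<+\infty$. A preliminary observation is that the non-local term is harmless in the tail: in \eqref{solution} the function $g$ enters only on $[-r,0]$ and $(0,t_1]$, and $g(z_{\theta_1},\dots,z_{\theta_q})$ depends on $z$ solely through its restriction to $\bigcup_{j}[\theta_j-r,\theta_j]\subset[-r,\theta_q]$, a fixed sub-interval on which $z$ is already determined and bounded. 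Hence, past the non-local evaluation points, $z$ obeys the \emph{causal} identities $z(t)=\boldsymbol{U}(t,s_i)G_i(s_i,z(s_i))+\int_{s_i}^{t}\boldsymbol{U}(t,s)f(s,z_s)\,ds$ on each $(s_i,t_{i+1}]$ and $z(t)=G_i(t,z(t))$ on each $(t_i,s_i]$.

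The core step is a Gr\"onwall estimate. Put $u(t)=\|z(t)\|_{\mathbb{R}^n}$. The corollary's hypothesis gives $\|f(s,z_s)\|_{\mathbb{R}^n}\le h(s)\big(1+\|z_s(0)\|_{\mathbb{R}^n}\big)=h(s)\big(1+u(s)\big)$, so on each continuity interval the causal identity yields
\begin{equation*}
u(t)\le M\,\|G_i(s_i,z(s_i))\|_{\mathbb{R}^n}+M\int_{s_i}^{t}h(s)\big(1+u(s)\big)\,ds,
\end{equation*}
while on the impulse intervals \textbf{H1}(ii) gives $u(t)=\|G_i(t,z(t))\|_{\mathbb{R}^n}\le L\sup_{[-r,\,s_i]}u$. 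The decisive feature is that the growth bound collapses the dependence of $f$ on the history $z_s$ to the \emph{pointwise} value $u(s)=\|z_s(0)\|_{\mathbb{R}^n}$, turning the functional (delayed) estimate into a classical one. Absorbing the impulse-reset terms with the aid of $ML<1$ and $MN_qq<1$ (both consequences of \textbf{H4}) and chaining across the finitely many impulse intervals contained in $[0,p_1)$, one obtains a bound of the form $u(t)\le C+\kappa\int_{0}^{t}h(s)\,u(s)\,ds$; since $h$ is continuous and $p_1<+\infty$, Gr\"onwall's inequality gives
\begin{equation*}
\sup_{[-r,\,p_1)}u\le C\exp\!\Big(\kappa\int_{0}^{p_1}h(s)\,ds\Big)=:C_\ast<+\infty .
\end{equation*}

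Finally, this finite bound contradicts the maximality of $[-r,p_1)$. Repeating the Cauchy estimate in the proof of Theorem \ref{prolongation}, but with the constant $\Psi(\|\tilde\phi\|+\rho)$ there replaced by the finite number $\big(\sup_{[0,p_1]}h\big)(1+C_\ast)$, shows that $z(p_1^-)=\tilde z$ exists in $\mathbb{R}^n$. Taking $\tilde z$ (together with $z$ on $[p_1-r,p_1]$) as new historical data and applying Theorems \ref{existence} and \ref{uniqueness} on a short interval $[p_1,p_1+\delta]$ extends $z$ beyond $p_1$; the hypotheses \textbf{H3}--\textbf{H4} for this short problem are satisfiable because, as $\delta\to0^+$, their left-hand sides are governed by the contraction constants $ML<1$ and $MN_qq<1$, so admissible radii $\rho$ exist. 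This contradicts maximality, whence $p_1=+\infty$, and $z$ is the unique solution of \eqref{eqExist} on $[-r,+\infty)$. The main obstacle is the Gr\"onwall step: one must funnel the delay, the impulsive resets, and the non-local coupling into a single classical integral inequality, which is precisely what the pointwise growth hypothesis $\|f(t,\varphi)\|\le h(t)(1+\|\varphi(0)\|)$ and the smallness constants $ML,\,MN_qq<1$ make possible.
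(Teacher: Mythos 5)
Your proposal is correct and follows essentially the same route as the paper: both exploit that the growth bound $\|f(s,z_s)\|\leq h(s)\left(1+\|z(s)\|_{\mathbb{R}^n}\right)$ collapses the delay to a pointwise quantity, apply Gr\"onwall's inequality to get an a priori bound on the tail of the maximal interval, and conclude global existence by the continuation alternative of Theorem \ref{prolongation}, with uniqueness inherited from Theorem \ref{uniqueness}. The only cosmetic difference is that the paper runs Gr\"onwall solely on $[s_N,p_1)$, starting from the already-determined (hence bounded) value $z(s_N)$, so your chaining across the earlier impulse intervals and the absorption via $ML<1$, $MN_qq<1$ is sound but unnecessary.
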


\begin{proof}
	Consider $t\in [s_N, p_1)$. It follows that:
	\begin{equation*}
		\begin{split}
			\|z(t)\|_{\mathbb{R}^n} &\leq \left\|\boldsymbol{U} (t,s_N)\right\|\left\|G_N(s_N, z(s_N))\right\|_{\mathbb{R}^n} +\int_{s_N}^t\left\|\boldsymbol{U}(t,s)\right\|\left\|f(s,z_s)\right\|ds\\
			&\leq ML\|z(s_N)\|_{\mathbb{R}^n} +   \int_{s_N}^{p_1}M h(s)ds+ \int_{s_N}^{t} M h(s)\|z (s)\|_{{\mathbb{R}^n}} ds .
		\end{split}
	\end{equation*}
	
	Hence, Gr{\"o}nwall's inequality yields
	\begin{equation*}
		\|z(t)\|_{\mathbb{R}^n}\leq M\left(L\|z(s_N)\|_{\mathbb{R}^n} +   \int_{s_N}^{p_1} h(s)ds\right)\exp\left( \int_{s_N}^{p_1} M h(s)ds\right).
	\end{equation*}
	
	By Theorem \ref{prolongation}, the solution stays bounded, as desired.
\end{proof}

\section{Example} \label{Example}

In this section, particular definitions for functions $G_i$, $g$ and $f$, $i\in I_N$, exemplify the results of this work. To this end, consider an arbitrary finite-dimensional continuous operator $\boldsymbol{A}$, such that $\boldsymbol{A}(t)$ is a $n\times n$ matrix. 

Given $N, R \in \mathbb{N}$, the non-linear term, $f:\overline{\mathbb{R}_+} \times PC_r([-r,0];\mathbb{R}^n) \longrightarrow  \mathbb{R}^n$, the functions describing non-instantaneous impulses, $G_i: (t_{i},s_{i}] \times \mathbb{R}^n  \longrightarrow \mathbb{R}^n$, and non-local conditions, $	g:PC_r^q([-r,0];(\mathbb{R}^n)^q)\longrightarrow PC_r([-r,0];\mathbb{R}^n)$, are given as follows, for $z\in PC_{r\tau}$ and $i\in I_N$,
\begin{equation*}
	\begin{split}
		f(t, \varphi)=\frac{1}{R}\left(\begin{array}{ccc}
				(\varphi_{1}(-r))^2 \\
				(\varphi_2(-r))^2 \\
				\vdots\\
				(\varphi_n(-r))^2
		\end{array}\right),\quad
	G_i(t,z(t))=\frac{\cos{(s_i)}}{R} \left(\begin{array}{ccc}
				\sin{(z_{1}(t))} \\
				\sin{(z_{2}(t))} \\
				\vdots\\
				\sin{(z_{n}(t))}
		\end{array}\right),
	\end{split}
\end{equation*}
\begin{equation*}
	\begin{split}
	g(\varphi)=\sum_{i=1}^{q} \frac{1}{R} \varphi_i.
	\end{split}
\end{equation*}

Clearly, $g$ verifies that
$g(0)=0$, and if $t\in [-r,0]$, then,
\begin{equation*}
	\begin{split}
		\|g(y)(t)-g(z)(t)\|_{\mathbb{R}^n} \leq \frac{1}{R} \|y(t)-z(t)\|_{(\mathbb{R}^n)^q}, \quad \text{ for all } y,\ z \in PCp^q.
	\end{split}
\end{equation*}

The functions $G_i$, $i\in I_N$, satisfy $G_i(\cdot,0) = 0,$ and, for any $y,\ z\in PC_{r\tau}$, given $t\in (t_i,s_i]$,
\begin{equation*}
	\begin{split}
		\left\|G_i(t,y(t))-G_i(t,z(t))\right\|_{\mathbb{R}^n} & \leq \frac{|\cos(s_i)|}{R} \left(\sum_{k=1}^N |\sin(y_k(t))-\sin(z_k(t))|^2\right)^{1/2}\\
		& \leq \frac{|\cos(s_i)|}{R}\|y-z\|.
	\end{split}
\end{equation*}

For $R$ sufficiently large, it yields
\begin{equation*}
\displaystyle \frac{|\cos(s_i)|}{R} + \frac{1}{R} q  < \frac{1}{2}.
\end{equation*}

Finally, given $t\geq 0$, $y,\ z\in PC_{r\tau}$, and $\varphi \in PC_{r}$, the function $f$ satisfies
\begin{equation*}
	\begin{split}
		\|f(t,y_t)-f(t,z_t)\|_{\mathbb{R}^n}& \leq \frac{1}{R}\left(\sum_{k=1}^{n}  \
			\left(|y_k(t-r)|+|z_k(t-r)|\right)^{2} \right)^{1/2} \|y-z\|\\
		&\leq K(\|y\|,\|z\|)\|y-z\|,
	\end{split}
\end{equation*}
and 
\begin{equation*}
	\|f(t,\varphi)\|_{\mathbb{R}^n}=\frac{1}{R}\left\|\left(\begin{array}{ccc}
			(\varphi_{1}(-r))^2 \\
			(\varphi_2(-r))^2 \\
			\vdots\\
			(\varphi_n(-r))^2
		\end{array}\right)\right\|_{\mathbb{R}^n}\leq\Psi(\|\varphi\|),
\end{equation*}
where $K$ and $\Psi$ are continuous non-decreasing functions. Hence, hypotheses \textbf{H1} and \textbf{H2} are satisfied. For $R$ sufficiently large, conditions \textbf{H3} and \textbf{H4} are similarly verified. Then, by Theorem \ref{uniqueness} and Corollary \ref{Corollary_prol}, system \eqref{eqExist}, with the foregoing definitions, admits a unique solution on $[-r,+\infty)$.

\section{Final Remarks} \label{Final Remarks}

In this work, existence and uniqueness of solutions for semi-linear systems of non-autonomous differential equations considering non-instantaneous impulses, delay, and non-local conditions simultaneously were proved. The technique used was based on Karakostas' fixed-point theorem, by transforming the existence of solutions problem into a fixed-point existence problem of a certain operator equation satisfying the specific conditions. This led to choose the adequate hypotheses to meet the requirements of that theorem. Observe that this work can be generalized to infinite-dimensional Banach spaces. However, proving equicontinuity of specific operator families and the main operator compactness must be carefully treated before applying a fixed-point theorem. The strongly continuous semigroup in the non-autonomous system requires compactness to ensure the uniform continuity away from zero. A different version of Arzel\`a-Ascoli theorem must be considered on the corresponding functional spaces. 
To end, the controllability of these systems is part of our outgoing research. In particular, the exact and approximate controllability of this system can be proven using Rothe's fixed-point theorem \cite{Leiva2014} and the techniques developed in \cite{Bashirov2015}.

\bibliographystyle{\mmnbibstyle}
\bibliography{mmnsample.bib}

\end{document}